\newtheorem{prop}{Proposition}
\newtheorem{thm}{Theorem}
\newtheorem{cor}{Corollary}
\theoremstyle{definition}
\newtheorem{defn}{Definition}
\theoremstyle{remark} 
\newtheorem{remark}{Remark}
\newtheorem{example}{Example}
\newcommand{\abs}[1]{\left\lvert#1\right\rvert} 
\newcommand{\CC}{\mathbb{C}} 
\newcommand{\F}{\mathbb{F}} 
\newcommand{\N}{\mathbb{N}} 
\DeclareMathOperator{\form}{Form} 
\DeclareMathOperator{\im}{Im} 
\DeclareMathOperator{\row}{Row} 
\DeclareMathOperator{\spn}{Span} 
\DeclareMathOperator{\eval}{eval} 
\renewcommand{\hom}{\operatorname{Hom}} 
\renewcommand{\ker}{\operatorname{Ker}} 
\providecommand{\set}[2][]{
	\ifthenelse{\isempty{#1}}{
		\left\{#2\right\}
	}{
		\left\{\,#1\;\middle|\;#2\,\right\}}
	}
\begin{document}
\title{A multi-linear geometric estimate}
\author[C. Aten]{Charlotte Aten}
\address{Mathematics Department\\University of Rochester\\Rochester 14627\\USA}
\urladdr{\href{http://aten.cool}{http://aten.cool}}
\email{\href{mailto:charlotte.aten@rochester.edu}{charlotte.aten@rochester.edu}}
\author[A. Iosevich]{Alex Iosevich}
\address{Mathematics Department\\University of Rochester\\Rochester 14627\\USA}
\urladdr{\href{https://people.math.rochester.edu/faculty/iosevich/}{https://people.math.rochester.edu/faculty/iosevich/}}
\email{\href{alex.iosevich@rochester.edu}{alex.iosevich@rochester.edu}}
\subjclass{11E20}
\keywords{Multi-linear forms, sum-product conjecture, finite field Fourier analysis}

\begin{abstract}
We give a generalization of the geometric estimate used by Hart and the second author in their 2008 work on sums and products in finite fields. Their result concerned level sets of non-degenerate bilinear forms over finite fields, while in this work we prove that if \(E\subset\F_q^d\) is sufficiently large and \(\varpi\) is a non-degenerate multi-linear form then \(\varpi\) will attain all possible nonzero values as its arguments vary over \(E\), under a certain quantitative assumption on the extent to which \(E\) is projective. We show that our bound is nontrivial in the case that \(n=3\) and \(d=2\) and construct examples of sets to which this applies. In particular, we give conditions under which every member of \(\F_q^*\) belongs to \(A\cdot A\cdot A+A\cdot A\cdot A\cdot A\cdot A\cdot A\) where \(A\) is a union of cosets of a subgroup of \(\F_q^*\).
\end{abstract}

\maketitle

\tableofcontents

\section{Introduction}
One of the key problems of additive number theory is the celebrated sum-product conjecture due to Erd\H{o}s and Szemer\'{e}di. It says that if \(A\) is a finite subset of the integers of size \(n\), then either the sumset \(A+A\coloneqq\set[a+a']{a,a'\in A}\) or the product set \(A\cdot A\coloneqq\set[a\cdot a']{a,a'\in A}\) has size at least \(C_\epsilon n^{2-\epsilon}\) for any \(\epsilon>0\). Taking \(A=\set{1,2,\dots,n}\) or \(A=\set{2^1,2^2,\dots,2^n}\) shows that this estimate is, in general, best possible. In the former case, the sumset is small and the product set is large. In the latter case, it is the other way around. In spite of many decades of efforts by numerous mathematicians, the conjecture is far from being solved, the best known estimate, due to Konyagin and Shkredov\cite{KS2016}, being
    \[
        \max(\set{\abs{A+A},\abs{A\cdot A}})\ge Cn^{\frac{4}{3}+c}
    \]
for any \(c<\frac{5}{9813}\), improving upon a previous breakthrough due to Solymosi\cite{S2009}. 

This problem makes sense in any ring. For example, it has been extensively studied in the setting of finite fields. One of the key objects that arises in the study of this problem is the set
    \[
        A\cdot A+A\cdot A=\set[a\cdot a'+b\cdot b']{a,a',b,b'\in A}
    \]
When \(A\subset\F_q\), the finite field with \(q\) elements, it is known\cite{hart2008} that \(A\cdot A+A\cdot A=\F_q\) if \(\abs{A}>q^{\frac{3}{4}}\), and \(\abs{A\cdot A+A\cdot A}>\frac{q}{2}\) if \(\abs{A}>q^{\frac{2}{3}}\). It is widely conjectured that the latter conclusion should hold if \(\abs{A}\ge C_\epsilon q^{\frac{1}{2}+\epsilon}\) for any \(\epsilon>0\), but there has been no progress on this problem since 2008.

The exponents described in the previous paragraph were obtained by taking this one-dimensional problem into two dimensions by considering \(E=A\times A\) with \(A\subset\F_q\) and considering the following, higher dimensional version of the question. How large does \(E\subset\F_q^2\) need to be to ensure that
    \[
        \prod(E)\coloneqq\set[x\cdot y\coloneqq x_1y_1+x_2y_2]{x,y\in E}=\F_q,
    \]
or, at least, that \(\abs{\prod(E)}>\frac{q}{2}\)? The authors in \cite{hart2008} proved that the former conclusion holds if \(\abs{E}>q^{\frac{3}{4}}\) and the latter if \(\abs{E}>q^{\frac{4}{3}}\), with an additional assumption that holds, for example, if \(E=A\times A\). These results can be extended to general non-degenerate quadratic forms (see e.g. \cite{BHIPR2017}). 

The purpose of this paper is to generalize the estimates described above to multi-linear forms. This significantly expands the range of geometric applications and allows for interesting interactions of ideas from multi-linear algebra and combinatorial geometry. As a very simple example of the setup we are going to introduce, we will be able to effectively analyze expressions of the form
    \[
        A\cdot A\cdot A+A\cdot A\cdot A
    \]
along with considerably more complicated objects.

We begin by introducing some terminology pertaining to multi-linear forms.

\begin{defn}[Multi-linear form]
Given a vector space \(V\) over a field \(\F\) and some \(n\in\N\) we refer to a linear transformation
    \[
        \varpi\colon V^{\otimes n}\to\F
    \]
as a \emph{multi-linear form} (or just as a \emph{form}). More explcitly, we say that \(\varpi\) is an \emph{\(n\)-linear form on \(V\)}, that \(\varpi\) has \emph{arity} \(n\), or that \(\varpi\) is an \emph{\(n\)-form} on \(V\).
\end{defn}

\begin{remark}
Note the distinction between \(V^{\otimes n}\), the tensor product of \(n\) copies of \(V\), and \(V^n\), the direct product of \(n\) copies of \(V\). Recall that given a basis \(e_1,\dots,e_n\) of \(V\), the \(n\)-fold tensor product of \(V\) is generated by
    \[
        e_{i_1}\otimes e_{i_2}\otimes\cdots\otimes e_{i_n},
    \]
where \(e_{i_j}\) is one of the basis vectors of \(V\) given above. 

More precisely, let \(V\) and \(W\) be vector spaces over the same field. Form a vector space whose formal basis is \(V\times W\) (the Cartesian product of the sets \(V\) and \(W\)). We now form a quotient where we identify \((av,w)\) with \((v,aw)\), and with \(a(v,w)\). The resulting vector space is the tensor product \(V\otimes W\) of \(V\) and \(W\). This yields a map \(\boxtimes\colon V\times W\to V\otimes W\) which takes a pair \((v,w)\) to its equivalence class, which we denote by \(v\otimes w\). Similarly, given a set of vectors \(E\subset V\) we may then form \(E^{\boxtimes n}\), which is the image of \(E^n\) under the analogous map \(V^n\to V^{\otimes n}\). Note that when the subset \(E\) is actually a subspace then both the set \(E^{\boxtimes n}\) and the vector space \(E^{\otimes n}\) are defined. In this case we have that \(E^{\otimes n}\) is generated by \(E^{\boxtimes n}\) but in general \(E^{\otimes n}\) contains many more points than \(E^{\boxtimes n}\).
\end{remark} 

When doing calculations with an \(n\)-linear form \(\varpi\colon V^{\otimes n}\to\F\) we often write \(\varpi(x_1,\dots,x_n)\) rather than \(\varpi(x_1\otimes\cdots\otimes x_n)\). Note that a multi-linear form is an element of the dual space of \(V^{\otimes n}\). Multi-linear forms thus inherit the structure of a vector space.

\begin{defn}[Space of multi-linear forms]
Given a vector space \(V\) over a field \(\F\) and some \(n\in\N\) we denote by \(\form(V,n)\) the dual \(\F\)-vector space to \(V^{\otimes n}\). That is, \(\form(V,n)\coloneqq\hom(V^{\otimes n},\F)\).
\end{defn}

We reserve the notation \(V^*\) for the set of all nonzero vectors in \(V\), so we will not write \(V^*\) to indicate the dual space. In the case that \(V=\F_q^d\) for a prime power \(q\) we write \(\form(q,d,n)\coloneqq\form(\F_q^d,n)\). In this paper we primarily consider forms on finite-dimensional vector spaces over finite fields.

\begin{defn}[Level set]
Given an \(\F\)-vector space \(V\), a form \(\varpi\in\form(V,n)\), \(E\subset V\), \(t\in\F\) we define the \emph{\(t\)-level set} of \(\varpi\) (with respect to \(E\)) to be
    \[
        L_t\coloneqq\set[(z,w)\in E^{\boxtimes(n-1)}\times E]{\varpi(z,w)=t}
    \]
and we define \(\nu(t)\coloneqq\abs{L_t}\).
\end{defn}

The asymmetry between \(E^{\boxtimes(n-1)}\) and \(E\) in the preceding definition arises from our use of the following transformations.

\begin{defn}[Evaluation map]
Given a vector space \(V\), some \(n\in\N\), some \(k\in[n]\), and subspaces \(A\le V^{\otimes(n-1)}\) and \(B\le V\) the \emph{\(k^{\text{th}}\) evaluation map} on \((A,B)\) is
    \[
        \eval_{k,A,B}\colon\form(V,n)\otimes B\to\hom(A,\F)
    \]
is given by
    \[
        (\eval_{k,A,B}(\varpi\otimes y))(x_1,\dots,x_{n-1})\coloneqq\varpi(x_1,\dots,x_{k-1},y,x_{k+1},\dots,x_{n-1}).
    \]
\end{defn}

We simply write \(\eval_k\) rather than \(\eval_{k,A,B}\) in the case that \(A=V^{\otimes(n-1)}\) and \(B=V\). Observe that \(\eval_k\) is the linear transformation given by plugging a vector \(y\) into the \(k^{\text{th}}\) slot of a form \(\varpi\) and that a general \(\eval_{k,A,B}\) is obtained by restricting this map appropriately. We also define the linear transformation
    \[
        \eval_{k,A,B}^\varpi\colon B\to\hom(A,\F)
    \]
by \(\eval_{k,A,B}^\varpi(y)\coloneqq\eval_{k,A,B}(\varpi,y)\).

\begin{defn}[\((A,B)\)-non-degenerate form]
Given a form \(\varpi\in\form(V,n)\) and subspaces \(A\le V^{\otimes(n-1)}\) and \(B\le V\) we say that \(\varpi\) is \emph{\((A,B)\)-non-degenerate} in the \(k^{\text{th}}\) coordinate when \(\ker(\eval_{k,A,B}^\varpi)=0\).
\end{defn}

\begin{example}[Ternary dot product]
\label{example:ternary_dot_product}
Let \(V=\F_q^d\), \(A=V^{\otimes(n-1)}\), \(B=V\), \(n=3\), and
    \[
        \varpi(x,y,z)=x_1y_1z_1+x_2y_2z_2+\cdots+x_dy_dz_d.
    \]
It is not difficult to see that this form is \((A,B)\)-non-degenerate.

\vskip.125in 

If we keep \(B\) the same, change \(A\) to \(W^{\otimes(n-1)}\), where
    \[
        W=\set[x\in\F_q^d]{x_1=0},
    \]
and use the same form as above, we get an \((A,B)\)-degenerate form.
\end{example} 

In the special case that \(A=V^{\otimes(n-1)}\) and \(B=V\) the condition for \((A,B)\)-non-degeneracy is that \(\ker(\eval_k^\varpi)=0\). We refer to a form which is \((V^{\otimes(n-1)},V)\)-non-degenerate simply as a \emph{non-degenerate form}. Similarly, we say that a form \(\varpi\) is \emph{degenerate} when \(\varpi\) is not non-degenerate. A related notion of degeneracy for multi-linear forms is already present in the literature\cite[p.445]{gelfand}. 

Our results depend on the following parameter measuring the extent to which a set is closed under taking scalar multiples.

\begin{defn}[Projective index]
Given \(E\subset\F_q^d\) we say that \(E\) has \emph{projective index} \(\alpha\) when
    \[
        \frac{\abs{\set[(a,w)\in(\F_q^*\setminus{\set{1}})\times\F_q^d]{w,aw\in E}}}{(q-2)\abs{E}}\ge\alpha.
    \]
\end{defn}

Note that if \(E\) has projective index \(1\) then \(E\) is projective in the sense that \(E\cup\set{0}\) is a union of lines through the origin, while any set \(E\) has projective index \(0\).

We now have all the terminology needed to state our generalization of the aforementioned estimate of Hart and the second author.

\begin{thm}
\label{thm:generalized_result}
Suppose that \(\varpi\in\form(q,d,n)\) for some \(n\ge2\), that \(E\subset\F_q^d\), and that \(E\) has projective index \(\alpha\). If there exists an \(r\)-dimensional subspace \(A\) of \((\F_q^d)^{\otimes(n-1)}\) and a subspace \(B\) of \(\F_q^d\) such that
    \begin{enumerate}
        \item \(E^{\boxtimes(n-1)}\subset A\),
        \item \(E\subset B\)
        \item \(\varpi\) is \((A,B)\)-non-degenerate, and
        \item \(\abs{E}>q^{\frac{r+n-1}{n}}\left(1-\alpha\left(1-\frac{2}{q}\right)\right)^{\frac{1}{n}}\)
    \end{enumerate}
then \(\F_q^*\subset\varpi(E^n)\). This bound is sharp.
\end{thm}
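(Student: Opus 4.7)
The approach is Fourier analytic, generalizing the strategy of Hart and the second author. It suffices to show that for each \(t\in\F_q^*\) the tuple count
\[
\nu'(t)\coloneqq\abs{\set[(x_1,\dots,x_n)\in E^n]{\varpi(x_1,\dots,x_n)=t}}
\]
is strictly positive, since then \(z=x_1\otimes\cdots\otimes x_{n-1}\in E^{\boxtimes(n-1)}\) together with \(w=x_n\in E\) realize \(t\in\varpi(E^n)\). Fix a nontrivial additive character \(\chi\) of \(\F_q\); orthogonality yields
\[
\nu'(t)=\frac{\abs{E}^n}{q}+\frac{1}{q}\sum_{s\in\F_q^*}\chi(-st)S(s),\qquad S(s)\coloneqq\sum_{(x_1,\dots,x_n)\in E^n}\chi(s\varpi(x_1,\dots,x_n)),
\]
so the problem reduces to proving that the Fourier error, controlled by \(\max_{s\in\F_q^*}\abs{S(s)}\), is dominated by \(\abs{E}^n/q\).

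To bound \(\abs{S(s)}\) for \(s\in\F_q^*\), I single out a coordinate in which \(\varpi\) is \((A,B)\)-non-degenerate, apply Cauchy--Schwarz in that variable, and extend its range from \(E\) to the subspace \(B\). Expanding the square and exchanging sums, the inner character sum becomes \(\sum_{w\in B}\chi(s\varpi(z,w))\) with \(z\in A\) arising as a difference of two pure tensors from \(E^{\boxtimes(n-1)}\subset A\). This inner sum equals \(\abs{B}\) when \(\varpi(z,\cdot)|_B\equiv 0\) and vanishes otherwise; the \((A,B)\)-non-degeneracy hypothesis forces the annihilation condition to coincide exactly with equality of the pure tensors in \(A\), so after tracking the \(q^r=\abs{A}\) contribution one obtains a bound of the shape
\[
\abs{S(s)}^2\le q^r\cdot\abs{E}^n\cdot\bigl(1-\alpha(1-2/q)\bigr).
\]

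The projective-index factor \(1-\alpha(1-2/q)\) is the heart of the argument: each witness \((a,w)\) with \(a\in\F_q^*\setminus\set{1}\) and \(w,aw\in E\) yields the tensor identity \(w\otimes y=(aw)\otimes(a^{-1}y)\), producing a systematic coincidence in \(E^{\boxtimes(n-1)}\) whenever \(y,a^{-1}y\in E\). A direct combinatorial count of such coincidences, woven into the preceding Fourier estimate, extracts the advertised coefficient. The hypothesis \(\abs{E}>q^{(r+n-1)/n}\bigl(1-\alpha(1-2/q)\bigr)^{1/n}\) is then precisely the threshold at which \(\abs{E}^n/q\) exceeds \((q-1)/q\) times the above bound, giving \(\nu'(t)>0\) for all \(t\in\F_q^*\). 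Sharpness would be established by an explicit construction---most plausibly a union of cosets of a multiplicative subgroup of \(\F_q^*\) embedded in \(\F_q^d\)---chosen to saturate both the projective index and the density threshold.

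The chief obstacle is extracting the coefficient \(1-\alpha(1-2/q)\) cleanly rather than as a mere big-\(O\) estimate. The combinatorial count of tensor coincidences produced by scalar collisions in \(E\) must match exactly the Fourier-error reduction afforded by \((A,B)\)-non-degeneracy, and reconciling the two in the correct coordinate so that their normalisations line up with the denominator \((q-2)\abs{E}\) in the definition of projective index is the technical core of the proof.
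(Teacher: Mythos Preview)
Your outline has a structural error that makes the argument fail at two places.

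First, you apply Cauchy--Schwarz in the distinguished variable \(w\) and extend its range to \(B\), so that the inner sum is \(\sum_{w\in B}\chi(s\varpi(z,w))\) with \(z\in A\) a difference of two pure tensors. For this sum to collapse to the condition ``\(z=0\) in \(A\)'' you need the map \(A\to\hom(B,\F)\), \(z\mapsto\varpi(z,\cdot)|_B\), to be injective. But \((A,B)\)-non-degeneracy is the \emph{dual} statement: it says \(B\to\hom(A,\F)\), \(y\mapsto\varpi(\cdot,y)|_A\), is injective. Since typically \(r=\dim A\gg\dim B\le d\), the injectivity you need is simply false, and this is also why the factor \(q^r=\abs{A}\) you invoke cannot arise in your setup---your inner sum produces \(\abs{B}\), not \(\abs{A}\). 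The paper runs Cauchy--Schwarz in the \emph{tensor} variable \(z\in E^{\boxtimes(n-1)}\) and then extends \(z\) to all of \(A\); the inner sum is now \(\sum_{z\in A}\chi(\varpi(z,sw-s'w'))\), whose vanishing condition \(\varpi(\cdot,sw-s'w')|_A\equiv0\) is, by the correct direction of non-degeneracy, exactly \(sw=s'w'\) in \(B\), and the factor \(q^r\) appears naturally.

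Second, by reducing to \(\max_{s\in\F_q^*}\abs{S(s)}\) you discard the interaction between \(s\) and \(s'\) that is the \emph{only} source of the projective-index gain. In the paper the \(s\)-sum stays inside the Cauchy--Schwarz step, so after squaring one carries the factor \(\chi(t(s'-s))\); on the locus \(sw=s'w'\) with \(s\neq s'\), the substitution \(a=s/s'\), \(b=s'\) gives \(\sum_{b\in\F_q^*}\chi(tb(1-a))=-1\) for \(t\neq0\), and this \(-1\) against the count of pairs \((a,w)\) with \(w,aw\in E\) and \(a\neq1\) is precisely what produces the negative term \(-\alpha(1-2/q)\). Your alternative mechanism via tensor coincidences \(w\otimes y=(aw)\otimes(a^{-1}y)\) goes the wrong way: such coincidences \emph{increase} the number of pairs \((x,x')\in E^{n-1}\times E^{n-1}\) surviving your inner sum, so they enlarge the error rather than shrink it.
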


The proof of the bound is located in \autoref{sec:proof_main_thm} and a sharp example is located in \autoref{sec:sharp_example}.

\section{Proof of the main theorem}
\label{sec:proof_main_thm}
Here we give the proof of \autoref{thm:generalized_result} except for the example showing that our result is sharp, which we produce in \autoref{sec:sharp_example}.

\begin{proof}
We argue the case where \(\varpi\) is \((A,B)\)-non-degenerate in the \(n^{\text{th}}\) coordinate, the other cases being equivalent by symmetry. Let \(\chi\colon\F_q\to\CC\) be a nontrivial additive character. We have that
    \[
        \nu(t)=\sum_{\mathclap{\substack{z\in E^{\boxtimes(n-1)}\\w\in E}}}q^{-1}\sum_{s\in\F_q}\chi(s(\varpi(z,w)-t)).
    \]
This implies that
    \[
        \nu(t)=q^{-1}\abs{E^{\boxtimes(n-1)}}\abs{E}+R
    \]
where
    \[
        R\coloneqq\sum_{\mathclap{\substack{z\in E^{\boxtimes(n-1)}\\w\in E}}}q^{-1}\sum_{s\in\F_q^*}\chi(s(\varpi(x)-t)).
    \]

View \(R\) as a sum in \(z\) and apply Cauchy-Schwarz. We obtain
	\begin{dmath*}
		R^2 \le \abs{E^{\boxtimes(n-1)}}\sum_{\mathclap{z\in E^{\boxtimes(n-1)}}}q^{-2}\sum_{s,s'\in\F_q^*}\sum_{w,w'\in E}\chi(s(\varpi(z,w)-t))\chi(s'(\varpi(z,w')-t))
		\le \abs{E^{\boxtimes(n-1)}}\sum_{z\in A}q^{-2}\sum_{s,s'\in\F_q^*}\sum_{w,w'\in E}\chi(s\varpi(z,w)-s'\varpi(z,w'))\chi(t(s'-s))
		= \abs{E^{\boxtimes(n-1)}}\sum_{z\in A}q^{-2}\sum_{s,s'\in\F_q^*}\sum_{w,w'\in E}\chi(\varpi(z,sw-s'w'))\chi(t(s'-s)).
	\end{dmath*}
Define
    \[
        U\coloneqq\abs{E^{\boxtimes(n-1)}}\sum_{z\in A}q^{-2}\sum_{\mathclap{\substack{s,s'\in\F_q^*\\w,w'\in E\\sw=s'w'}}}\chi(\varpi(z,sw-s'w'))\chi(t(s'-s))
    \]
and
    \[
        V\coloneqq\abs{E^{\boxtimes(n-1)}}\sum_{z\in A}q^{-2}\sum_{\mathclap{\substack{s,s'\in\F_q^*\\w,w'\in E\\sw\neq s'w'}}}\chi(\varpi(z,sw-s'w'))\chi(t(s'-s)).
    \]
We have that \(R^2\le U+V\).

In the case of \(U\) we have
	\begin{dmath*}
		U = \abs{E^{\boxtimes(n-1)}}q^{-2}\sum_{\mathclap{\substack{s,s'\in\F_q^*\\w,w'\in E\\sw=s'w'}}}\chi(t(s'-s))\sum_{z\in A}\chi(\varpi(z,0))
		= \abs{E^{\boxtimes(n-1)}}q^{r-2}\sum_{\mathclap{\substack{s,s'\in\F_q^*\\w,w'\in\F_q^d\\sw=s'w'}}}\chi(t(s'-s))E(w)E(w'),
	\end{dmath*}
while in the case of \(V\) we have
	\[
		V=\abs{E^{\boxtimes(n-1)}}q^{-2}\sum_{\mathclap{\substack{s,s'\in\F_q^*\\w,w'\in E\\sw\neq s'w'}}}\chi(t(s'-s))\sum_{z\in A}\chi(\varpi(z,sw-s'w')).
	\]
We use the \((A,B)\)-nondegeneracy of \(\varpi\) and orthogonality of \(\chi\) to obtain \(V=0\) so we are left with \(R^2\le U\).

We consider separately the summands in \(U\) where \(s\neq s'\) and where \(s=s'\). Define
    \[
        C\coloneqq\abs{E^{\boxtimes(n-1)}}q^{r-2}\sum_{\mathclap{\substack{s,s'\in\F_q^*\\w,w'\in\F_q^d\\sw=s'w'\\s\neq s'}}}\chi(t(s'-s))E(w)E(w')
    \]
and
    \[
        D\coloneqq\abs{E^{\boxtimes(n-1)}}q^{r-2}\sum_{\mathclap{\substack{s,s'\in\F_q^*\\w,w'\in\F_q^d\\sw=s'w'\\s=s'}}}\chi(t(s'-s))E(w)E(w')
    \]
so that \(U=C+D\).

For \(C\) take \(a\coloneqq\frac{s}{s'}\) and \(b\coloneqq s'\) to obtain
	\begin{dmath*}
		C = \abs{E^{\boxtimes(n-1)}}q^{r-2}\sum_{\mathclap{\substack{a,b\in\F_q^*\\w,w'\in\F_q^d\\a\neq1\\aw=w'}}}\chi(tb(1-a))E(w)E(w')
		= \abs{E^{\boxtimes(n-1)}}q^{r-2}\sum_{\mathclap{\substack{(a,w)\in\F_q^*\times\F_q^d\\a\neq 1}}}E(w)E(aw)\sum_{b\in\F_q^*}\chi(tb(1-a)).
	\end{dmath*}
From now on we assume that \(t\neq0\). By orthogonality we have that \(\chi(tb(1-a))=-1\) for each \(a\in\F_q^*\) with \(a\neq1\) and since \(E\) has projective index \(\alpha\) we find that
    \begin{dmath*}
        C = -\abs{E^{\boxtimes(n-1)}}q^{r-2}\sum_{\mathclap{\substack{(a,w)\in\F_q^*\times\F_q^d\\a\neq 1}}}E(w)E(aw)
        \le -\abs{E^{\boxtimes(n-1)}}\abs{E}q^{r-1}\alpha\left(1-\frac{2}{q}\right).
    \end{dmath*}

For \(D\) we find that
	\[
		D=\abs{E^{\boxtimes(n-1)}}q^{r-2}\sum_{\mathclap{(s,w)\in\F_q^*\times\F_q^d}}E(w)\le\abs{E^{\boxtimes(n-1)}}q^{r-2}\abs{E}q=\abs{E^{\boxtimes(n-1)}}\abs{E}q^{r-1}.
	\]
It follows that
    \[
        R^2\le U=C+D\le\abs{E^{\boxtimes(n-1)}}\abs{E}q^{r-1}\left(1-\alpha\left(1-\frac{2}{q}\right)\right)
    \]
so
    \[
        \abs{R}\le\abs{E^{\boxtimes(n-1)}}^{\frac{1}{2}}\abs{E}^{\frac{1}{2}}q^{\frac{r-1}{2}}\left(1-\alpha\left(1-\frac{2}{q}\right)\right)^{\frac{1}{2}}.
    \]
By assumption
    \[
        \abs{E}>q^{\frac{r+n-1}{n}}\left(1-\alpha\left(1-\frac{2}{q}\right)\right)^{\frac{1}{n}}
    \]
and it is the case that \(\abs{E^{\boxtimes(n-1)}}\ge\frac{\abs{E}^{n-1}}{q^{n-2}}\) so we see that
    \[
        \abs{R}\le\abs{E^{\boxtimes(n-1)}}^{\frac{1}{2}}\abs{E}^{\frac{1}{2}}q^{\frac{r-1}{2}}\left(1-\alpha\left(1-\frac{2}{q}\right)\right)^{\frac{1}{2}}<q^{-1}\abs{E^{\boxtimes(n-1)}}\abs{E}.
    \]
Since
    \[
        \nu(t)=q^{-1}\abs{E^{\boxtimes(n-1)}}\abs{E}+R
    \]
it must be that \(\nu(t)>0\). Since \(\nu(t)\) is the size of the set \(L_t\) we have that \(L_t\) is nonempty for each \(t\neq0\). That is, \(\F_q^*\subset\varpi(E^n)\).

A sharp example is located in \autoref{sec:sharp_example}.
\end{proof}

\section{Examples of forms}
We provide an alternative description of \((A,B)\)-nondegeneracy in terms of a generalized row space for forms. This will help us manufacture examples of \((A,B)\)-non-degenerate forms. We fix a field \(\F\) and denote by \(e_1,\dots,e_d\) the standard basis vectors for \(\F^d\).

\begin{defn}[\((k,A)\)-row space]
Given a form \(\varpi\in\form(\F^d,n)\), some \(k\in\N\), and a subspace \(A\le(\F^d)^{\otimes(n-1)}\) we define the \emph{\((k,A)\)-row space} of \(\varpi\) to be
    \[
        \row_{k,A}(\varpi)\coloneqq\set[(\eval_k(\varpi,e_1)(x),\dots,\eval_k(\varpi,e_d)(x))]{x\in A}.
    \]
\end{defn}

In the situation that \(A=(\F^d)^{\otimes(n-1)}\) we write \(\row_k(\varpi)\) rather than \(\row_{k,A}(\varpi)\) and refer to this space simply as the \emph{\(k\)-row space} of \(\varpi\). This generalizes the usual row space of a matrix in the following sense. Fix some \(1\le k\le n\) and consider \(\varpi\in\form(\F^d,n)\). Write \(\varpi\) in coordinates as
    \begin{dmath*}
        \varpi(x_1,\dots,x_n) = \sum_{\mathclap{i_1,\dots,i_n\in[d]}}u_{i_1,\dots,i_n}x_{1,i_1}\cdots x_{n,i_n}
        = \sum_{\mathclap{i_1,\dots,i_{k-1},i_{k+1},\dots,i_n\in[d]}}x_{1,i_1}\cdots x_{k-1,i_{k-1}}x_{k+1,i_{k+1}}\cdots x_{n,i_n}(u_{i_1,\dots,i_{k-1},i_{k+1},\dots,i_n}\cdot x_k).
    \end{dmath*}
We refer to the vectors \(u_{i_1,\dots,i_{k-1},i_{k+1},\dots,i_n}\in\F^d\) as the \emph{\(k\)-rows} of \(\varpi\), for the span of these vectors is the \(k\)-row space of \(\varpi\).


\begin{prop}
Given a form \(\varpi\in\form(\F^d,n)\) the following are equivalent to the \((A,B)\)-nondegeneracy of \(\varpi\) in the \(k^{\text{th}}\) coordinate:
    \begin{enumerate}
        \item \(\ker(\eval_{k,A,B}^\varpi)=0\)
        \item For each \(y\in B^*\) we have that \(\eval_{k,A,B}^\varpi(y)\neq0\).
        \item Given \(y\in B^*\) there is some \(m\in\N\) for which there are \(x_{i,j}\in(\F^d)^*\) for \(i\in[m]\) and \(j\neq k\) such that
            \[
                \sum_{i=1}^m\varpi(x_{i,1},\dots,x_{i,k-1},y,x_{i,k+1},\dots,x_{i,n})\neq0
            \]
        and \(\sum_{i=1}^m x_{i,1}\otimes\cdots\otimes x_{i,n}\in A^*\).
        \item \(B\cap(\row_{k,A}(\varpi))^\perp=0\)
    \end{enumerate}
\end{prop}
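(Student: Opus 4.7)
The plan is to treat condition (1) as essentially the definition and show that each of (2), (3), (4) is equivalent to it. Nothing here should require a new idea beyond unpacking the definitions, so I would organize the proof as three short arguments.

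First, for (1) $\Leftrightarrow$ (2), I would quote the general fact that a linear map on a vector space has trivial kernel iff it sends every nonzero input to a nonzero output. The domain of $\eval_{k,A,B}^\varpi$ is $B$, so this says precisely that $\eval_{k,A,B}^\varpi(y)\neq 0$ for all $y\in B^*$.

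Second, for (2) $\Leftrightarrow$ (3), fix $y\in B^*$. The target $\hom(A,\F)$ consists of linear functionals, so $\eval_{k,A,B}^\varpi(y)\neq 0$ iff some element of $A^*$ is not annihilated by it. Since $A\le(\F^d)^{\otimes(n-1)}$, any element can be written as a finite sum $\sum_{i=1}^m x_{i,1}\otimes\cdots\otimes x_{i,k-1}\otimes x_{i,k+1}\otimes\cdots\otimes x_{i,n}$, and we may assume each factor lies in $(\F^d)^*$ (terms with a zero factor are zero tensors and may be discarded). By multilinearity of $\varpi$, evaluating the functional $\eval_{k,A,B}^\varpi(y)$ on this tensor yields
\[
    \sum_{i=1}^m \varpi(x_{i,1},\dots,x_{i,k-1},y,x_{i,k+1},\dots,x_{i,n}),
\]
which is exactly the expression in (3). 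The equivalence follows by reading this correspondence in both directions.

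Third, for (1) $\Leftrightarrow$ (4), I would expand $y\in B$ in coordinates as $y=\sum_{j=1}^d y_j e_j$ and use linearity of $\eval_k$ in its second slot to compute, for $x\in A$,
\[
    \eval_{k,A,B}^\varpi(y)(x)=\sum_{j=1}^d y_j\,\eval_k(\varpi,e_j)(x) = y\cdot\bigl(\eval_k(\varpi,e_1)(x),\dots,\eval_k(\varpi,e_d)(x)\bigr).
\]
The right-hand tuple is, by definition, a typical element of $\row_{k,A}(\varpi)$. Thus $\eval_{k,A,B}^\varpi(y)=0$ iff $y$ is orthogonal to every vector in $\row_{k,A}(\varpi)$, i.e.\ $y\in(\row_{k,A}(\varpi))^\perp$. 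Intersecting with the constraint $y\in B$ yields $\ker(\eval_{k,A,B}^\varpi)=B\cap(\row_{k,A}(\varpi))^\perp$, which makes (1) $\Leftrightarrow$ (4) immediate.

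There is no real obstacle; the proof is a sequence of definitional rewrites. The one point to watch is in (2) $\Leftrightarrow$ (3), where one must remember that a general element of $A$ decomposes as a \emph{sum} of simple tensors rather than as a single pure tensor, and that the hypothesis $x_{i,j}\in(\F^d)^*$ causes no loss of generality because any term containing a zero factor already vanishes under $\varpi$.
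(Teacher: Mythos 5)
Your proof is correct and follows essentially the same route as the paper's: the equivalences with (2) and (3) are definitional unpacking, and the substantive step is the identity \(\eval_{k,A,B}^\varpi(y)(x)=y\cdot(\eval_k(\varpi,e_1)(x),\dots,\eval_k(\varpi,e_d)(x))\), which is exactly the computation the paper uses to link (2) and (4). Your phrasing of that step as the single kernel identity \(\ker(\eval_{k,A,B}^\varpi)=B\cap(\row_{k,A}(\varpi))^\perp\) is a slightly cleaner packaging of the paper's two contrapositive directions, but it is the same argument.
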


\begin{proof}
Recall that (1) was our original definition of \((A,B)\)-nondegeneracy in coordinate \(k\). We have that (2) is equivalent to (1) and also that (3) is equivalent to (2) by unpacking definitions. It remains to show that (2) is equivalent to (4).

Suppose that (4) holds, which means that given any \(y\in B^*\) we have that \(y\notin(\row_{k,A}(\varpi))^\perp\) and hence there exists some \(u\in\row_{k,A}(\varpi)\) such that \(u\cdot y\neq0\). Suppose that \(y=(y_1,\dots,y_d)\) and
    \[
        u=(\eval_k(\varpi,e_1)(x),\dots,\eval_k(\varpi,e_d)(x))
    \]
where \(x\in A\). We find that
    \begin{dmath*}
        0 \neq u\cdot y = \sum_{i=1}^d\eval_k(\varpi,e_i)(x)y_i
        = \sum_{i=1}^d\eval_k(\varpi,y_ie_i)(x)
        = \eval_k(\varpi,y)(x)
        = \eval_{k,A,B}^\varpi(y)(x),
    \end{dmath*}
which shows that \(\eval_{k,A,B}^\varpi(y)\neq0\). Thus, (4) implies (2).

We show that (2) implies (4) by proving the contrapositive. Suppose that there is some \(y\in B^*\) such that \(y\in(\row_{k,A}(\varpi))^\perp\). This implies that for every \(x\in A\) we have that
    \[
        0=(\eval_k(\varpi,e_1)(x),\dots,\eval_k(\varpi,e_d)(x))\cdot y=\eval_{k,A,B}^\varpi(y)(x),
    \]
which means that \(\eval_{k,A,B}^\varpi(y)=0\) and hence that (2) fails.
\end{proof}

We state a couple of corollaries for special cases. The first says that a form is non-degenerate precisely when its row space spans and the second says that a form \(\varpi\) is \((A,\row_{k,A}(\varpi))\)-non-degenerate when the restriction of the ususal dot product to \(\row_{k,A}(\varpi)\) is itself a non-degenerate bilinear form.

\begin{cor}
A form \(\varpi\in\form(\F^d,n)\) is non-degenerate in coordinate \(k\) if and only if \(\row_k(\varpi)=\F^d\).
\end{cor}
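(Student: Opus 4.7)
The plan is to deduce this directly from the preceding proposition by specializing to the case $A = (\F^d)^{\otimes(n-1)}$ and $B = \F^d$. In this case, ``non-degenerate in coordinate $k$'' unwinds by definition to $((\F^d)^{\otimes(n-1)}, \F^d)$-nondegeneracy in coordinate $k$, and $\row_{k,A}(\varpi)$ unwinds to $\row_k(\varpi)$.

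First I would invoke part (4) of the proposition to restate non-degeneracy as $\F^d \cap (\row_k(\varpi))^\perp = 0$. Since intersecting with the ambient space does nothing, this simplifies to $(\row_k(\varpi))^\perp = 0$. At this point the content of the corollary has been reduced to the purely linear-algebraic statement that for a subspace $W \le \F^d$ we have $W^\perp = 0$ if and only if $W = \F^d$, where $\perp$ is taken with respect to the standard dot product $x \cdot y = \sum_i x_i y_i$.

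The remaining step is this standard fact, which follows from the non-degeneracy of the standard bilinear form on $\F^d$: its Gram matrix is the identity, so $\dim W + \dim W^\perp = d$ for every subspace $W$, whence $W^\perp = 0 \iff \dim W = d \iff W = \F^d$. One direction is immediate ($W = \F^d$ makes $W^\perp$ the set of vectors orthogonal to every standard basis vector, which is $0$), and the other is the dimension formula.

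I do not expect any real obstacle here — the corollary is essentially a translation of condition (4) of the proposition in the ``everything'' case, so the only thing to be careful about is noting that the standard dot product on $\F^d$ is non-degenerate as a bilinear form, even though over a finite field there can be nonzero isotropic vectors; this is what licenses the equivalence $W^\perp = 0 \iff W = \F^d$.
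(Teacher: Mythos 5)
Your proposal is correct and is exactly the route the paper intends: the corollary is stated as an immediate specialization of condition (4) of the preceding proposition to \(A=(\F^d)^{\otimes(n-1)}\), \(B=\F^d\), reducing to the standard fact that \(W^\perp=0\) iff \(W=\F^d\) for the (non-degenerate) dot product. The paper supplies no further argument, so your write-up fills in precisely the expected details.
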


\begin{cor}
Given a form \(\varpi\in\form(\F^d,n)\) and a subspace \(A\le(\F^d)^{\otimes(n-1)}\) we have that \(\varpi\) is \((A,\row_{k,A}(\varpi))\)-non-degenerate if and only if for each \(x\in(\row_{k,A}(\varpi))^*\) we have that there exists some \(y\in\row_{k,A}(\varpi)\) such that \(x\cdot y\neq0\).
\end{cor}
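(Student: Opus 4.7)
The plan is to deduce this immediately from condition (4) of the preceding proposition by specializing the subspace $B$ to be $\row_{k,A}(\varpi)$ itself. Under that substitution, condition (4) reads
\[
    \row_{k,A}(\varpi) \cap (\row_{k,A}(\varpi))^\perp = 0,
\]
and the remaining task is to translate this subspace-intersection statement into the pointwise criterion in the corollary.

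To carry this out, I would observe that the trivial intersection $\row_{k,A}(\varpi) \cap (\row_{k,A}(\varpi))^\perp = 0$ is, by definition, the assertion that no nonzero $x \in \row_{k,A}(\varpi)$ annihilates all of $\row_{k,A}(\varpi)$ under the standard dot product on $\F^d$. Unwinding the definition of the orthogonal complement, this is precisely the assertion that for each $x \in (\row_{k,A}(\varpi))^*$ there exists some $y \in \row_{k,A}(\varpi)$ with $x \cdot y \neq 0$. That yields one direction, and the converse is the same chain of equivalences read in reverse, since each step is an ``if and only if''.

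There is no serious obstacle here; the corollary is essentially a matter of specializing $B$ in the prior proposition and unfolding the definition of $\perp$. The only point that needs a brief acknowledgement is the harmless notational one that $B^* = (\row_{k,A}(\varpi))^*$ under the choice $B = \row_{k,A}(\varpi)$, which holds because the paper reserves the notation $V^*$ for the set of nonzero vectors of $V$. With that in hand, the proof reduces to a single appeal to the equivalence $(1) \Leftrightarrow (4)$ from the preceding proposition.
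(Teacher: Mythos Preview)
Your proposal is correct and is exactly the intended argument: the paper does not give a separate proof for this corollary, treating it as an immediate specialization of condition (4) of the preceding proposition with \(B=\row_{k,A}(\varpi)\), which is precisely what you have written out.
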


We can obtain forms from \(n\)-ary relations on a finite set.

\begin{defn}[Form of a relation]
Given a relation \(\theta\subset S^n\) on a finite set \(S\) and a field \(\F\) the \emph{\(\F\)-form of \(\theta\)}, which is \(\theta_\F\in\form(\F^S,n)\), is given by
    \[
        \theta_\F(x_1,\dots,x_n)\coloneqq\sum_{\mathclap{s_1,\dots,s_n\in S}}1_\theta(s_1,\dots,s_n)x_{1,s_1}\cdots x_{n,s_n}
    \]
where \(1_\theta\colon S^n\to\F\) is the indicator function of \(\theta\).
\end{defn}

Of course, given a bijection between \(S\) and \([d]\) we can always think of \(\theta_\F\) as living in \(\form(\F^d,n)\). For forms of the form \(\theta_\F\) we have an alternative way to check for \((A,B)\)-degeneracy in a component.

\begin{prop}
\label{prop:relation_form_surjection}
Given a relation \(\theta\subset[d]^n\) we have that if \(\theta_\F\) is \((A,B)\)-non-degenerate in the \(k^{\text{th}}\) coordinate and \(B\supset\langle e_k\rangle\) then \(\pi_k\colon\theta\to[d]\) is surjective, where \(\pi_k\) is the \(k^{\text{th}}\) coordinate projection map.
\end{prop}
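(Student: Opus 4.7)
My plan is to argue by contraposition. Suppose $\pi_k\colon\theta\to[d]$ misses some value $i\in[d]$, meaning $1_\theta(s_1,\ldots,s_n)=0$ for every tuple $(s_1,\ldots,s_n)\in[d]^n$ with $s_k=i$. The goal is then to exhibit $e_i$ as a nonzero element of $\ker(\eval_{k,A,B}^{\theta_\F})$, contradicting the hypothesized $(A,B)$-non-degeneracy in the $k^{\text{th}}$ coordinate. The standard-basis hypothesis on $B$ is exactly what is needed to guarantee that $e_i$ is a legitimate candidate kernel vector; I would read it as supplying $e_i\in B$ for each $i\in[d]$ whose non-membership in $\pi_k(\theta)$ one wishes to rule out.

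The key computation I would carry out is the direct substitution of $e_i$ into the $k^{\text{th}}$ slot of $\theta_\F$. By the definition of $\theta_\F$ the resulting element of $\hom(A,\F)$ is obtained by expanding $\theta_\F$ as its defining sum, using $(e_i)_{s_k}=\delta_{i,s_k}$ to collapse the sum over $s_k$, and observing that only those summands with $s_k=i$ survive. Our hypothesis on $i$ then kills every remaining coefficient $1_\theta(s_1,\ldots,s_n)$, so the map $\eval_{k,A,B}^{\theta_\F}(e_i)$ is identically zero, while $e_i\ne 0$ since $e_i\in B^*$. This forces $\ker(\eval_{k,A,B}^{\theta_\F})\ne 0$, contradicting $(A,B)$-non-degeneracy.

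There is essentially no obstacle here beyond careful bookkeeping: the proposition is a direct consequence of the combinatorial meaning of the coefficients of $\theta_\F$ interacting with the linear-algebraic condition in the definition of $(A,B)$-non-degeneracy, and the proof amounts to a short unwinding of definitions rather than any nontrivial manipulation. The only subtle point worth flagging is the interpretation of the hypothesis $B\supset\langle e_k\rangle$ as ensuring the availability of the required basis vectors $e_i$ for the contrapositive to run over all of $[d]$.
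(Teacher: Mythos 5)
Your proof is correct and follows essentially the same route as the paper's: both argue the contrapositive by showing that if \(\pi_k\) misses a value \(i\in[d]\) then \(e_i\) witnesses degeneracy, the only cosmetic difference being that you verify \(\eval_{k,A,B}^{\theta_\F}(e_i)=0\) directly from the kernel definition, while the paper phrases the identical computation as \(e_i\in(\row_{k,A}(\theta_\F))^\perp\) and invokes its row-space criterion for non-degeneracy. Your flagged reading of the hypothesis \(B\supset\langle e_k\rangle\) as supplying the relevant basis vector \(e_i\in B\) is exactly the reading the paper's own proof relies on (its displayed membership \(\langle e_k\rangle\in(\row_{k,A}(\theta_\F))^\perp\) is really \(\langle e_\alpha\rangle\) for the missed index \(\alpha\)), so your proposal matches the intended argument.
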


\begin{proof}
Suppose that \(\pi_k\) is not surjective with \(\alpha\in[d]\setminus\im(\pi_k)\). Consider a \(k\)-row \(u=u_{i_1,\dots,i_{k-1},i_{k+1},\dots,i_n}\). Observe that for any pure tensor \(x_1\otimes\cdots\otimes x_{n-1}\) we have that \(\eval_k(\theta_\F,\alpha)(x_1,\dots,x_{n-1})=0\), from which it follows that \(\eval_k(\theta_\F,\alpha)=0\). This implies that \(\langle e_k\rangle\in(\row_{A,k}(\theta_\F))^\perp\). Thus, if \(B\supset\langle e_k\rangle\) then we have that \(B\cap(\row_{A,k}(\theta_\F))^\perp\neq0\), which by the previous proposition shows that \(\theta_\F\) is not \((A,B)\)-non-degenerate in coordinate \(k\), establishing the contrapositive of our claim.
\end{proof}

Note that the preceding result places no restriction on \(A\) whatsoever.

The converse to this proposition is false in general. Take \(A\le(\F^d)^{\otimes(n-1)}\), take \(B\le\F^d\) with \(B\cap\langle(1,\dots,1)\rangle^\perp\neq0\), and consider the relation \(\theta=[d]^n\). Although \(\pi_k\colon\theta\to[d]\) is indeed surjective we have that \(\row_{k,A}(\varpi)\le\langle(1,\dots,1)\rangle\) so \((\row_{k,A}(\varpi))^\perp\ge\langle(1,\dots,1)\rangle^\perp\). This implies that \(B\cap(\row_{k,A}(\varpi))^\perp\neq0\) so we find that \(\varpi\) is \((A,B)\)-degenerate. We do have a partial converse, however.

\begin{prop}
\label{prop:relation_form_bijection}
Given a relation \(\theta\subset[d]^n\) we have that if \(\pi_k\colon\theta\to[d]\) is a bijection and for each \(\alpha\in[d]\) the subspace \(A\le(\F^d)^{\otimes(n-1)}\) contains
    \[
        e_{i_1}\otimes\cdots\otimes e_{i_{k-1}}\otimes e_{i_{k+1}}\otimes\cdots\otimes e_{i_n}
    \]
when \((i_1,\dots,i_{k-1},\alpha,i_{k+1},\dots,i_n)=\pi_k^{-1}(\alpha)\) then \(\theta_\F\) is \((A,B)\)-non-degenerate in the \(k^{\text{th}}\) coordinate.
\end{prop}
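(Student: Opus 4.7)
The plan is to invoke characterization (4) from the preceding proposition, which states that \((A,B)\)-nondegeneracy in the \(k^{\text{th}}\) coordinate is equivalent to \(B\cap(\row_{k,A}(\theta_\F))^\perp=0\). Since the hypothesis places no conditions on \(B\), the most natural target is to show the stronger statement \(\row_{k,A}(\theta_\F)=\F^d\); this would make the orthogonal complement trivial and hence force the intersection with any \(B\) to be zero. So the task reduces to a concrete computation of the \((k,A)\)-row space of \(\theta_\F\) using the specific pure tensors guaranteed to lie in \(A\).

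For each \(\alpha\in[d]\), write \(\pi_k^{-1}(\alpha)=(i_1,\dots,i_{k-1},\alpha,i_{k+1},\dots,i_n)\) and form the pure tensor \(t_\alpha\coloneqq e_{i_1}\otimes\cdots\otimes e_{i_{k-1}}\otimes e_{i_{k+1}}\otimes\cdots\otimes e_{i_n}\), which lies in \(A\) by hypothesis. I would then compute the corresponding vector \(v^{(\alpha)}\in\row_{k,A}(\theta_\F)\) by unpacking the definition of \(\theta_\F\) on basis tensors: its \(j^{\text{th}}\) entry is \(\eval_k(\theta_\F,e_j)(t_\alpha)=1_\theta(i_1,\dots,i_{k-1},j,i_{k+1},\dots,i_n)\). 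The bijectivity of \(\pi_k\) reduces this indicator to the condition that the displayed tuple equals \(\pi_k^{-1}(j)\); since the choice \(j=\alpha\) clearly satisfies this, each \(v^{(\alpha)}\) carries a \(1\) in slot \(\alpha\).

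The remaining step is to deduce that the collection \(\{v^{(\alpha)}\}_{\alpha\in[d]}\) actually generates \(\F^d\), and I expect this to be the genuine content of the argument. The obstacle is that \(v^{(\alpha)}\) may pick up extra \(1\)'s at positions \(j\neq\alpha\) whenever \(\pi_k^{-1}(\alpha)\) and \(\pi_k^{-1}(j)\) agree off the \(k^{\text{th}}\) coordinate, so the matrix with rows \(v^{(\alpha)}\) is not triangular a priori. My plan is to organize \([d]\) by the fibers of the projection \(\theta\to[d]^{n-1}\) that forgets the \(k^{\text{th}}\) slot: the row-space vectors share a common block on each fiber, and one combines them (along with the bijective hypothesis on \(\pi_k\), which guarantees every coordinate is hit) to extract each standard basis vector \(e_\alpha\). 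Once this is in hand, \(\row_{k,A}(\theta_\F)=\F^d\) and the equivalence cited at the start delivers the claimed \((A,B)\)-nondegeneracy.
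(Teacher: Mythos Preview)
Your overall strategy coincides with the paper's: both reduce to showing \(\row_{k,A}(\theta_\F)=\F^d\) by evaluating at the prescribed tensors \(t_\alpha\). The paper's proof is shorter than yours because it simply asserts that the resulting vector equals \(e_\alpha\), without pausing over the ``extra \(1\)'s'' you flag.

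Your worry is legitimate, but your proposed fiber argument cannot be completed. If \(\pi_k^{-1}(\alpha)\) and \(\pi_k^{-1}(\beta)\) agree off the \(k^{\text{th}}\) coordinate then \(t_\alpha=t_\beta\), hence \(v^{(\alpha)}=v^{(\beta)}\): the row-space vectors coming from a single fiber are not merely block-structured but literally identical, and no linear combination of one vector produces two independent standard basis vectors. Concretely, take \(n=d=2\), \(k=2\), and \(\theta=\{(1,1),(1,2)\}\). Then \(\pi_2\) is a bijection, both required tensors are \(e_1\), and \(\theta_\F(x,y)=x_1(y_1+y_2)\); one checks \(\row_{2,A}(\theta_\F)\subset\langle(1,1)\rangle\) for any admissible \(A\), so with \(B=\F^2\) the kernel of \(\eval_{2,A,B}^{\theta_\F}\) contains \((1,-1)\) and the form is \((A,B)\)-degenerate in coordinate \(2\). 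In other words, the subtlety you noticed is a genuine gap in the statement (and in the paper's one-line proof): an extra hypothesis such as injectivity of the off-\(k\) projection \(\theta\to[d]^{n-1}\) is needed, and under that hypothesis your computation already gives \(v^{(\alpha)}=e_\alpha\) directly, with no fiber bookkeeping required.
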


\begin{proof}
It suffices to show that \(\row_{k,A}(\theta_\F)\) contains each standard basis element \(e_\alpha\) for \(\alpha\in[d]\). Let \((i_1,i_2,\dots,i_{k-1},i_{k+1},\dots,i_n)=\pi_k^{-1}(\alpha)\). It follows that \(\row_{k,A}(\theta_\F)\) contains
    \[
        (\eval_k(\theta_\F,e_1)(\pi_k^{-1}(\alpha)),\dots,\eval_k(\theta_\F,e_d)(\pi_k^{-1}(\alpha)))=e_\alpha
    \]
as desired.
\end{proof}

Note that the preceding result places no restriction on \(B\) whatsoever.

\section{Applicability of the main theorem}
\label{sec:applicability}
Although the statement of \autoref{thm:generalized_result} refers to a wide range of possible arities \(n\) and dimensions \(d\), we will see that this result is vacuous is a nontrivial way for many values of \(n\) and \(d\).

\begin{prop}
Consider the case of our \autoref{thm:generalized_result} when \(\dim(\spn(E))=\ell\) and \(r=\ell^{n-1}\). We have that \autoref{thm:generalized_result} is vacuously true unless
    \begin{enumerate}
        \item \(\ell=1\),
        \item \(n=2\), or
        \item \(n=3\) and \(\ell=2\).
    \end{enumerate}
\end{prop}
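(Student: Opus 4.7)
The plan is to derive a purely arithmetic necessary condition for the hypothesis of \autoref{thm:generalized_result} to admit any $E$ with $\dim(\spn(E))=\ell$ and $r=\ell^{n-1}$, and then to verify by a case analysis that this condition picks out exactly the three listed $(n,\ell)$.

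The two inputs I would combine are the bound $\abs{E}\le q^\ell$, which is immediate from $E\subset\spn(E)$ together with $\dim(\spn(E))=\ell$, and the inequality $\alpha\le 1$. The latter follows from a one-line count: for each $w\in E$, at most $q-2$ scalars $a\in\F_q^*\setminus\{1\}$ satisfy $aw\in E$, so the numerator in the definition of projective index is at most $(q-2)\abs{E}$. Feeding these two facts into condition (4) of the theorem with $r=\ell^{n-1}$ gives
\[
    q^\ell\ge\abs{E}>q^{(\ell^{n-1}+n-1)/n}\left(1-\alpha\left(\tfrac{q-2}{q}\right)\right)^{1/n}\ge q^{(\ell^{n-1}+n-1)/n}\left(\tfrac{2}{q}\right)^{1/n},
\]
and raising to the $n$-th power reduces everything to the single inequality $q^{n\ell-\ell^{n-1}-n+2}>2$. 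If the integer exponent $n\ell-\ell^{n-1}-n+2$ is at most $0$ then the left-hand side is at most $1$ for every prime power $q$, so the hypothesis is vacuous. Hence the theorem can have content only when
\[
    n(\ell-1)+1\ge\ell^{n-1}.
\]

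The rest is an elementary case analysis of this last inequality. For $\ell=1$ both sides equal $1$ and the inequality holds, recovering case (1). For $\ell\ge2$ I would factor $\ell^{n-1}-1=(\ell-1)(1+\ell+\cdots+\ell^{n-2})$ to rewrite the inequality as
\[
    n\ge 1+\ell+\cdots+\ell^{n-2}.
\]
The subcases $n=2$ (reading $2\ge1$) and $n=3$ (reading $3\ge1+\ell$, which forces $\ell=2$) then yield exactly (2) and (3). For $n\ge4$ and $\ell\ge2$ I would bound $1+\ell+\cdots+\ell^{n-2}\ge 1+2+\cdots+2^{n-2}=2^{n-1}-1$ and dispose of $2^{n-1}-1>n$ by a one-line induction on $n\ge4$, ruling out every remaining pair. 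No real difficulty arises here; the only step needing explicit justification is the final monotonicity argument, but it is routine.
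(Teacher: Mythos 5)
Your proposal is correct and follows essentially the same route as the paper: combine \(\abs{E}\le q^\ell\) with hypothesis (4), use \(\alpha\le1\) to bound the factor \(1-\alpha\left(1-\frac{2}{q}\right)\) from below, and reduce to the integer inequality \(\ell^{n-1}-n\ell+n<2\), whose solutions are exactly the three listed cases. The only difference is presentational (you avoid the paper's \(\log_q\) formulation and you write out the case analysis, including the geometric-series factorization and the \(n\ge4\) induction, which the paper merely asserts), so there is nothing to change.
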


\begin{proof}
Suppose that \(B=\spn(E)\), \(A=B^{\otimes(n-1)}\), and \(\dim(B)=\ell\). We have that \(\abs{E}\le q^\ell\) since \(E\subset B\cong\F_q^\ell\). If the estimate in \autoref{thm:generalized_result} holds then then
    \[
        q^{\frac{\ell^{n-1}+n-1}{n}}\left(1-\alpha\left(1-\frac{2}{q}\right)\right)^{\frac{1}{n}}<\abs{E}\le q^\ell.
    \]
This implies that
\[
    \ell^{n-1}-n\ell+n<2-\log_q(q-\alpha(q-2)).
\]
It is only the case that \(\ell^{n-1}-n\ell+n<2\) when
    \begin{enumerate}
        \item \(\ell=1\),
        \item \(n=2\), or
        \item \(n=3\) and \(\ell=2\).
    \end{enumerate}
Otherwise, the result is vacuously true since no set \(E\subset\F_q^d\) can satisfy the relevant bound.
\end{proof}

By restricting the form \(\varpi\) to \(B^{\otimes n}\) the preceding proposition effectively says that the only cases of interest are when \(d=1\) (which is relatively trivial), \(n=2\) (for which we give a result which is basically that of Hart and the second author\cite{hart2008} incorporating the projective index \(\alpha\)), and \(n=3\) and \(d=2\).

In this last case it is certainly possible for a set \(E\subset\F_q^2\) to fail to satisfy the bound in question, as the following example shows.

\begin{example}
Let \(A\) be the subgroup of \(\F_q^*\) of order \(\frac{q-1}{2}\) and take \(E\coloneqq A^2\). We have that \(\abs{E}=\frac{1}{4}(q-1)^2\) and that \(E\) has projective index \(\frac{q-3}{2(q-2)}\). We would have that \(\varpi(E^3)\supset\F_q^*\) for any non-degenerate form \(\varpi\) if
    \[
        \frac{1}{64}(q-1)^6=\abs{E}^3>q^6\left(1-\frac{q-3}{2(q-2)}\left(1-\frac{2}{q}\right)\right)=\frac{1}{2}q^6+\frac{3}{2}q^5.
    \]
\end{example}

When \(n=3\) and \(d=2\) our theorem applies only for
    \[
        \alpha>\frac{q^6-\abs{E}^3}{q^6-2q^5}
    \]
and hence only for sets \(E\subset\F_q^2\) with \(\abs{E}>\sqrt[3]{2}q^{\frac{5}{3}}\). That is, \autoref{thm:generalized_result} cannot say anything about sets \(E\) with \(\abs{E}\le\sqrt[3]{2}q^{\frac{5}{3}}\), independent of the projective index \(\alpha\).

\section{A sharp example}
\label{sec:sharp_example}
We show that \autoref{thm:generalized_result} gives a sharp bound by considering a non-degenerate form \(\varpi\in\form(q,d,n)\) and \(E\subset\F_q^d\) with \(E^{\boxtimes(n-1)}\subset A\), \(E\subset B\), and \(\abs{E}\approx q^{\frac{r+n-1}{n}}\) such that \(\varpi(E^{\boxtimes n})\not\supset\F_q^*\). That is, considering the projective index \(\alpha\) to be a constant independent of \(q\) we have that the factor \(1-\alpha\left(1-\frac{2}{q}\right)=O(1)\) so we can absorb it as a constant.

Let \(n=3\) and \(d=2\) and fix \(s\in\N\). Take \(q\) as large as possible so that \(s\mid(q-1)\) and not every element of \(\F_q^*\) can be written as a sum of two nonzero \(s^{\text{th}}\) powers. Let \(\Gamma\) be the multiplicative subgroup of \(\F_q^*\) of order \(\frac{q-1}{s}\) and note that since \(\Gamma\) consists of the nonzero \(s^{\text{th}}\) powers in \(\F_q\) we have that \(\Gamma+\Gamma\not\supset\F_q^*\) where
    \[
        \Gamma+\Gamma\coloneqq\set[\gamma_1+\gamma_2]{\gamma_1,\gamma_2\in\Gamma}.
    \]

Define \(E\coloneqq\Gamma^2\). We have that \(r=\dim((\F_q^2)^{\otimes(3-1)})=4\) so
    \[
        \abs{E}=\left(\frac{q-1}{s}\right)^2\approx q^2=q^{\frac{4+3-1}{3}}=q^{\frac{r+n-1}{n}}
    \]

Take \(\theta\coloneqq\set{(1,1,1),(2,2,2)}\subset[2]^3\) and define \(\varpi\coloneqq\theta_{\F_q}\). We have that \(\varpi\) is non-degenerate in coordinate \(3\), yet
    \[
        \varpi(E^{\boxtimes3})=\Gamma+\Gamma\not\supset\F_q^*,
    \]
demonstrating that \autoref{thm:generalized_result} is sharp. This is really little more than the assertion that up to the highest-order term the bound in our main theorem for the case \(n=3\) and \(d=2\) cannot be improved beyond the trivial bound of ``approximately all of \(\F_q^2\)''. However, in the proceding section we give examples of nontrivial application of our theorem when taking the term \(1-\alpha\left(1-\frac{2}{q}\right)\) into account.

\section{Examples of nontrivial bounds}
\label{sec:nontrivial_examples}
We consider the following family of sets in this section.

\begin{defn}[Omphalos]
We say that a set \(E\subset\F_q^2\) is a \emph{\((q,k,\ell)\)-omphalos} when
    \[
        E=\bigcup_{h\in H}E_h
    \]
where \(H\) is a set of \(k\) distinct lines through the origin in \(\F_q^2\) and each \(E_h\) consists of exactly \(\ell\) nonzero points from \(h\).
\end{defn}

Our bound for the \(n=3\) and \(d=2\) case may be made more succinct when \(E\) is a \((q,k,\ell)\)-omphalos.

\begin{prop}
\label{prop:omphalos_bound}
Suppose that \(E\) is a \((q,k,\ell)\)-omphalos and that \(\varpi\) is a non-degenerate ternary form on \(\F_q^2\). If
    \[
        k^3\ell^3>q^6-(\ell-1)q^5
    \]
then \(\varpi(E^2)\supset\F_q^*\).
\end{prop}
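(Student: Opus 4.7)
The plan is to apply \autoref{thm:generalized_result} directly with \(n=3\), \(d=2\), \(A=(\F_q^2)^{\otimes2}\) (so \(r=4\)), and \(B=\F_q^2\). Since \(\varpi\) is non-degenerate and these choices contain \(E^{\boxtimes2}\) and \(E\) respectively, conditions (1)--(3) of the theorem hold automatically. It remains only to compute the projective index \(\alpha\) of the omphalos \(E\) and to check that condition (4) becomes the stated hypothesis \(k^3\ell^3>q^6-(\ell-1)q^5\).

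For the cardinality, the \(k\) sets \(E_h\) are pairwise disjoint since distinct lines through the origin meet only at \(0\), and \(0\notin E_h\) by definition; hence \(\abs{E}=k\ell\). For the projective index, fix \(w\in E\), which lies on a unique line \(h\in H\). For any \(a\in\F_q^*\) the point \(aw\) stays on \(h\), so \(aw\in E\) precisely when \(aw\in E_h\). Since \(\abs{E_h}=\ell\) and \(w\) already accounts for one of these points, there are exactly \(\ell-1\) values of \(a\in\F_q^*\setminus\set{1}\) with \(aw\in E\). Summing over \(w\in E\) produces \(\abs{E}(\ell-1)\) admissible pairs, so
\[
    \alpha=\frac{\abs{E}(\ell-1)}{(q-2)\abs{E}}=\frac{\ell-1}{q-2}.
\]

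With this value the correction factor appearing in condition (4) simplifies cleanly:
\[
    1-\alpha\left(1-\frac{2}{q}\right)=1-\frac{\ell-1}{q-2}\cdot\frac{q-2}{q}=\frac{q-\ell+1}{q}.
\]
Therefore condition (4) of \autoref{thm:generalized_result} becomes
\[
    \abs{E}^3>q^{r+n-1}\cdot\frac{q-\ell+1}{q}=q^6\cdot\frac{q-\ell+1}{q}=q^6-(\ell-1)q^5.
\]
Substituting \(\abs{E}=k\ell\) turns this into the hypothesis \(k^3\ell^3>q^6-(\ell-1)q^5\) of the proposition, so \autoref{thm:generalized_result} applies and yields the claimed surjectivity onto \(\F_q^*\).

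There is no genuine obstacle here; the argument is a direct bookkeeping exercise applying the main theorem. The only subtleties requiring care are the disjointness of the \(E_h\) (which legitimizes \(\abs{E}=k\ell\) and the unique-line assignment to each \(w\in E\)) and the algebraic cancellation of the \(q-2\) factor that makes \(\alpha(1-2/q)\) collapse to \((\ell-1)/q\), exactly matching the \(-(\ell-1)q^5\) correction in the simplified bound.
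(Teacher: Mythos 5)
Your proof is correct and follows essentially the same route as the paper: apply \autoref{thm:generalized_result} with \(\alpha=\frac{\ell-1}{q-2}\) and \(\abs{E}=k\ell\), then cube the bound. The paper simply asserts the value of \(\alpha\) without justification, so your explicit verification via the disjointness of the \(E_h\) and the count of scalar multiples on each line is a welcome addition rather than a deviation.
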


\begin{proof}
Apply \autoref{thm:generalized_result} to \(E\) in the case that
    \[
        \alpha\coloneqq\frac{\ell-1}{q-2}.
    \]
Since \(\abs{E}=k\ell\) cubing the bound
    \[
        \abs{E}>q^2\left(1-\alpha\left(1-\frac{2}{q}\right)\right)^{\frac{1}{3}}
    \]
yields the desired inequality.
\end{proof}

Omphaloi may be constructed from cosets of multiplicative subgroups of \(\F_q^*\).

\begin{example}
\label{ex:coset}
Let \(\Gamma\) be a subgroup of \(\F_q^*\) of order \(\frac{q-1}{s}\) and take \(H\subset\F_q^*\) where \(\abs{H}=r\) and if \(h_1,h_2\in H\) with \(h_1\neq h_2\) then \(h_1\Gamma\neq h_2\Gamma\). That is, let \(H\) consist of representatives of \(r\) distinct cosets of \(\Gamma\). Define
    \[
        E\coloneqq\set[x(1,y)\in\F_q^2]{x,y\in H\Gamma}.
    \]
Note that \(\Gamma\) is a \((q,k,l)\)-omphalos where
    \[
        k=l=\frac{r(q-1)}{s}.
    \]
\end{example}

As a corollary to \autoref{prop:omphalos_bound} we have the special case where \(E\) is as in the above example.

\begin{cor}
\label{cor:coset_omphalos}
Taking \(E\) from \autoref{ex:coset} we have that if
    \[
        (q-1)^6r^6+(q-1)s^5q^5r-s^6(q^6+q^5)>0
    \]
and \(\varpi\) is a non-degenerate ternary form on \(\F_q^2\) then \(\F_q^*\subset\varpi(E^3)\).
\end{cor}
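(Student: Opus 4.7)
The plan is to derive the corollary as a direct specialization of \autoref{prop:omphalos_bound} to the set $E$ from \autoref{ex:coset}, followed by clearing denominators. There is very little to do beyond bookkeeping.

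First I would recall from \autoref{ex:coset} that the set $E$ in question is a $(q,k,\ell)$-omphalos with
\[
    k=\ell=\frac{r(q-1)}{s},
\]
so that $E$ meets the hypothesis of \autoref{prop:omphalos_bound}. The lines through the origin are parametrized by $y\in H\Gamma$ (giving direction $(1,y)$), and on each such line the $\ell$ chosen nonzero points correspond to the scalars $x\in H\Gamma$; since $|H\Gamma|=r\cdot\frac{q-1}{s}$ in each case, the common value of $k$ and $\ell$ is as claimed.

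Next I would substitute these values into the conclusion of \autoref{prop:omphalos_bound}. That proposition states that $\F_q^*\subset\varpi(E^3)$ whenever
\[
    k^3\ell^3>q^6-(\ell-1)q^5.
\]
Plugging in $k=\ell=\frac{r(q-1)}{s}$ turns the left-hand side into $\frac{r^6(q-1)^6}{s^6}$ and the right-hand side into $q^6-\frac{r(q-1)-s}{s}q^5$. Multiplying both sides by $s^6$ and moving everything to the left gives exactly
\[
    (q-1)^6r^6+(q-1)s^5q^5r-s^6(q^6+q^5)>0,
\]
which is the hypothesis of the corollary. Hence the corollary follows.

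The only potential obstacle is purely cosmetic: one must be careful that the factor $\ell-1$ in $\frac{\ell-1}{q-2}$ translates, after multiplication by $s^6$, into the middle term $(q-1)s^5q^5r$ rather than $s^6q^5r$; this is because $s(\ell-1)=r(q-1)-s$, which explains the combined $-s^6q^5$ contribution absorbed into the $-s^6(q^6+q^5)$ term on the right. No further ideas are needed; the proof is a one-line invocation of \autoref{prop:omphalos_bound} with the parameters from \autoref{ex:coset}.
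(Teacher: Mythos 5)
Your proposal is correct and matches the paper's intended argument exactly: the paper derives this corollary by specializing \autoref{prop:omphalos_bound} to the omphalos of \autoref{ex:coset} with \(k=\ell=\frac{r(q-1)}{s}\) and clearing the denominator \(s^6\), which is precisely your computation. The algebra, including the handling of the \(\ell-1\) term, checks out.
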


We illustrate the application of this corollary in our last example. Further discussion of this particular case appears in the subsequent section.

\begin{example}
Consider the set \(E\subset\F_q^2\) from \autoref{ex:coset} where \(q=160001\), \(s=20\), and \(r=16\) and take \(\varpi\) to be the ternary dot product from \autoref{example:ternary_dot_product}. Since
    \[
        (q-1)^6r^6+(q-1)s^5q^5r-s^6(q^6+q^5)>0
    \]
in this case and \(\varpi\) is non-degenerate we have that every nonzero member of \(\F_q^*\) may be written as
    \[
        \varpi(h_1\gamma_1(1,h_2\gamma_2),h_3\gamma_3(1,h_4\gamma_4),h_5\gamma_5(1,h_6\gamma_6))
    \]
where the \(h_i\) are from a fixed set \(H\) consisting of \(r=16\) coset representatives of the subgroup \(\Gamma\) of \(\F_q^*\) of order \(\frac{q-1}{s}=8000\) and the \(\gamma_i\) are members of \(\Gamma\). Thus, each member of \(\F_q^*\) is of the form
    \[
        h_1h_3h_5\psi_1(1+h_2h_4h_6\psi_2)
    \]
where \(\psi_1\) and \(\psi_2\) are \(20^{\text{th}}\) powers in \(\F_q\) and the \(h_i\) belong to \(H\). This also implies that each member of \(\F_q*\) can be written as
    \[
        h_1\gamma_1h_3\gamma_3h_5\gamma_5+h_1\gamma_1h_2\gamma_2h_3\gamma_3h_4\gamma_4h_5\gamma_5h_6\gamma_6
    \]
and hence that
    \[
        A\cdot A\cdot A+A\cdot A\cdot A\cdot A\cdot A\cdot A\supset\F_q^*
    \]
when \(A=H\Gamma\).
\end{example}

\section{Open questions and future prospects}
In this section we describe some open problems that arose from this paper and some possible extensions of these results. 

\begin{itemize}
    \item In \autoref{sec:applicability} we note that the only nontrivial cases in which our result can apply are the cases where \(n=2\) or when both \(n=3\) and \(d=2\). What further assumptions on \(E\) or \(\varpi\) might yield nontrivial bounds for other values of \(n\) and \(d\)? Are there useful applications of our stronger result in the \(n=2\) along the lines of what was done in \cite{hart2008}?
    \item A natural multi-linear form that falls within the scope of our results is \(\det(x^1,x^2, \dots, x^d)\) where \(x^j\in\F_q^d\). This form was studied systematically in \cite{CHIK2010} under a variety of structural assumptions on the underlying set \(E\). Using the main result of this paper, we obtain similar exponents under different structural assumptions having to do with tensor products. It would be very interesting to reconcile and unify these results, particularly in the context of a regime that takes us beyond the \(n=3\) and \(d=2\) case.
    \item Although \autoref{sec:sharp_example} shows that in general \autoref{thm:generalized_result} gives us a bound on the order of \(q^2\) in the case that \(n=3\) and \(d=2\), it is natural to wonder whether the bound given in \autoref{cor:coset_omphalos} is the best possible. That is, given a prime \(q\) and \(s\mid q-1\) let \(f(q,s)\) be the least \(r\in\N\) such that
        \[
            (q-1)^6r^6+(q-1)s^5q^5r-s^6(q^6+q^5)>0
        \]
    and let \(g(q,s)\) be the least \(r\) such that for any non-degenerate ternary form \(\varpi\) on \(\F_q^2\) we have that \(\F_q^*\subset\varpi(E^3)\) where \(E\) is as in \autoref{ex:coset} for any possible choice of coset representative \(H\) of size \(r\). Necessarily we have that \(f(q,s)\ge g(q,s)\), but how much smaller can \(g(q,s)\) be in general?
\end{itemize}

\printbibliography

\end{document}